\newtheorem{definition}{Definition}
\newtheorem{theorem}{Theorem}
\newtheorem{lemma}{Lemma}
\newenvironment{proof}[1][Proof]{\textbf{#1.} }{\ \rule{0.5em}{0.5em}}
\date{}
\long\def\symbolfootnote[#1]#2{\begingroup%
\def\thefootnote{$\;$}\footnote[#1]{$^*$#2}\endgroup}
\begin{document}

\title{On Kuratowski partitions in the Marczewski structure and Ellentuck topology}
\author{Ryszard Frankiewicz and Joanna Jureczko}

\maketitle

\symbolfootnote[2]{Mathematics Subject Classification: 54B15 (Primary), 54D20 (Secondary). 
	
\hspace{0.2cm}
Keywords: \textsl{Ellentuck topology, Baire property, Kuratowski partition}}
\begin{abstract}
We show that large sets in the Marczewski structure and Ellentuck topology do not admit Kuratowski partitions. 
\end{abstract}

\section{Introduction}

In 1935  K. Kuratowski  posed the problem (\cite{KK1}) which is equivalent to non-existence of Kuratowski partitions in the term of functions. It was found to be equivalent to the following result, (see \cite{KK2} and \cite{EFK1}). 
\\ 
\\
\textbf{Theorem (Kuratowski)}
\textit{(CH) If $\mathcal{F}$  is a partition of the interval $[0,1]$ of cardinality at most $\omega_1$ into meager sets then there exists a family $\mathcal{A} \subseteq \mathcal{F}$ such that $\bigcup \mathcal{A}$ has not the Baire property.}
\\

Later R. Solovay in \cite{RS} and L. Bukovsky in \cite{LB} proved this result without (CH) assumption using the forcing method and natural embedding in the proof.
In \cite{EFK} the authors gave a generalization of this result for a class of spaces of weight $\leq 2^\omega$. 
In the light of Kuratowski theorem we accept the following definition, (compare \cite{FS}).

\begin{definition}
Let $X$ be a Baire topological space. 
A partition $\mathcal{F}$ of $X$  is a Kuratowski partition if  for each subfamily $\mathcal{F}' \subseteq \mathcal{F}$ the set $\bigcup \mathcal{F'}$ has the Baire property. 
\end{definition}

During considerations on this topic the natural problem has been arisen whether this theorem is also true in other spaces as $(s)$-structure (Sacks forcing) and completely Ramsey sets in  Ellentuck topology (Mathias forcing).

From a technical point of view and proofs of theorems, investigations of $(s)$-structure and completely Ramsey sets are similar because in both cases it is impossible to construct  Bernstein-type sets for large sets directly and indicate the cardinality of large partitions in both spaces. Such construction requires using Fusion Lemma, (see \cite{TJ, ST}), which was introduced in perfect sets forcing, (i.e. Sacks forcing).

In this paper we show that without (CH) there are no Kuratowski partitions in both considered spaces just using Fusion Lemma in the proofs.
Moreover, we show that the existence of large cardinals has not an influence on the $(s)$-structures and the Ellentuck structure. We provide considerations for $(s)$-sets before considerations for Ellentuck  sets, because the $(s)$-structure is more readable than the Ellentuck structure, which seems to be in some sense "polydimensional".

The last theorem of this paper was published for the first time in \cite{FS}, but the proof presented there was incorrect, which was discovered after publication. The mistake relied on eliminating the fusion during counting the cardinality of a partition of large Ellentuck sets restricted to perfect sets. The mistake is repaired here which implies the change of method using in the proof. The first author apologizes for the mistake and informs that although theorem remains true, the arguments using in the proof presented in \cite{FS} cannot be repaired.

However, reasonings presented below are restricted to $(s)$-sets and Ellentuck sets, also remain true for a wider class of sets.

It is mostly provable that the further results can be obtained by investigations of quotient algebra structure, i. e. $P(\kappa)/I$, where $I$ is a $\kappa$-complete ideal on $\kappa$. 

In the paper we use standard notation and terminology for the field. For definitions and facts not cited here we refer the reader to \cite{TJ, KK, ST}.

\section{Partitions in the Marczewski structure}

Let $\mathbb{R}$ be a real line with natural topology.
A set $A \subseteq \mathbb{R}$ is an \textit{$(s)$-set} if for each perfect set $P \subseteq \mathbb{R}$ there exists a perfect set $P'\subseteq P$ such that $P' \subseteq A \cap P$ or $P' \subseteq P \setminus A$.
A set $A \subseteq \mathbb{R}$ is \textit{an $(s_0)$-set} if for each perfect set $P \subseteq \mathbb{R}$  there exists a perfect set $P'\subseteq P$ such that $P' \cap A = \emptyset$.
A set $A\subseteq \mathbb{R}$ is \textit{large} if it is not an $(s_0)$-set.
A family $\mathcal{F}$ of subsets of $A$ is \textit{large} if $\bigcup \mathcal{F}$ is large.

Let $A \subseteq \mathbb{R}$ be a large set and let $\mathcal{F}$ be a large partition of $A$ into $(s_0)-sets.$  The set $A$ is \textit{dense in $(s)-sense\  ($s$-dense)$} if for any perfect set $P\subseteq \mathbb{R}$ such that $A\cap P$ is large the family $\{F \in \mathcal{F} \colon F\cap P\not = \emptyset \}$ is also large.
\begin{lemma}
Let $A \subseteq \mathbb{R}$  be a large and $(s)-$dense set. For any large partition $\mathcal{F}$ of $A$ into $(s_0)$-sets and for any perfect set $P\subseteq R$ the family
$$\mathcal{F}_P = \{F \cap P \colon F_\alpha \in \mathcal{F}\}$$ has cardinality continuum.
\end{lemma}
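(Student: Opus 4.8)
The plan is to build an injection from a set of size continuum into $\mathcal{F}_P$, rather than to argue by a direct cardinality count. The naive count fails precisely because the $(s_0)$ ideal need not be $\mathfrak{c}$-additive: from $|\mathcal{F}_P| < \mathfrak{c}$ one cannot conclude that $A \cap P = \bigcup\{F \cap P : F \in \mathcal{F}_P\}$ is a union of \emph{few enough} $(s_0)$-sets to be itself an $(s_0)$-set. This is exactly the point at which the Fusion Lemma must replace a counting argument.

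First I would reduce to a convenient perfect set. The interesting case is when $A \cap P$ is large (otherwise $\mathcal{F}_P$ collapses, and the content of the statement concerns the perfect set witnessing largeness). Unwinding the definition of \emph{large}, there is a perfect $R$ every perfect subset of which meets $A \cap P$. Since $2^{\omega}$, and hence $R$, decomposes into $\mathfrak{c}$ pairwise disjoint perfect sets (e.g.\ the fibres of a homeomorphism $R \cong 2^{\omega}\times 2^{\omega}$), each of which must meet $A \cap P$, the closed set $R \cap P$ is uncountable and therefore contains a perfect set $P_0 \subseteq P$ with the strong property that \emph{every} perfect subset of $P_0$ meets $A$. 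As any piece meeting $P_0$ meets $P$, it suffices to prove $|\mathcal{F}_{P_0}| = \mathfrak{c}$; I rename $P_0$ as $P$.

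The core is a fusion construction of a Cantor scheme $\{Q_\sigma : \sigma \in 2^{<\omega}\}$ of perfect sets with $Q_\emptyset \subseteq P$, $Q_{\sigma 0}, Q_{\sigma 1}$ disjoint perfect subsets of $Q_\sigma$ of vanishing diameter along branches, subject to the \emph{separation condition}: for each $\sigma$ no piece $F \in \mathcal{F}$ meets both $Q_{\sigma 0}$ and $Q_{\sigma 1}$. This condition is inherited downward, so at level $n$ the $2^n$ cells are pairwise separated, and no blank configuration arises because the $(s)$-density of $A$ guarantees, at each large perfect cell, that $\{F : F \cap Q_\sigma \neq \emptyset\}$ is large and hence that genuinely distinct pieces are available to be separated. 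By the Fusion Lemma the fusion $Q_\infty = \bigcap_n \bigcup_{|\sigma|=n} Q_\sigma$ is perfect and the branch map $2^{\omega} \to Q_\infty$ is a homeomorphism. I expect the sibling-separation step to be the main obstacle: producing disjoint perfect $Q_{\sigma 0}, Q_{\sigma 1} \subseteq Q_\sigma$ met by disjoint families of pieces is a Mycielski-type ``rainbow'' requirement, and it is here that one must exploit the $(s_0)$-ness of the pieces through an inner fusion that thins one side away from the union of the pieces met by the other, rather than removing common pieces one at a time (of which there may be continuum many).

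Finally I would count. Every perfect subset of $Q_\infty \subseteq P$ is a perfect subset of $P$, so by the strong largeness secured in the reduction it meets $A$; thus $A \cap Q_\infty$ meets every perfect subset of $Q_\infty$, and the decomposition of $Q_\infty \cong 2^{\omega}$ into $\mathfrak{c}$ pairwise disjoint perfect sets forces $|A \cap Q_\infty| = \mathfrak{c}$. The separation condition makes the assignment $q \mapsto F_q$ (the unique piece of $\mathcal{F}$ containing $q$) injective on $A \cap Q_\infty$: distinct points lie on distinct branches, which diverge at some node $\sigma$, so their pieces meet $Q_{\sigma 0}$ and $Q_{\sigma 1}$ respectively and cannot coincide. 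Hence $|\mathcal{F}_P| \geq |\mathcal{F}_{P_0}| \geq |A \cap Q_\infty| = \mathfrak{c}$, and since trivially $|\mathcal{F}_P| \leq |\mathcal{F}| \leq \mathfrak{c}$, the family $\mathcal{F}_P$ has cardinality continuum.
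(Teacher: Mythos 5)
Your proposal follows essentially the same route as the paper: a fusion of a Cantor scheme of perfect subsets of $P$ whose cells are met by pairwise disjoint subfamilies of $\mathcal{F}$, followed by a count along the continuum many branches; your endgame (the injection $q \mapsto F_q$ on $A \cap Q_\infty$) is in fact spelled out more carefully than the paper's concluding sentence. The one step you flag as the main obstacle and do not carry out --- producing sibling cells $Q_{\sigma 0}, Q_{\sigma 1}$ separated by the partition, with both still large --- is exactly the step the paper itself asserts without proof (``Divide $\mathcal{F}$ into disjoint large families\dots''), so relative to the paper's own argument you have not omitted anything it supplies.
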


\begin{proof}
Divide $\mathcal{F}$ into disjoint large families $\mathcal{F}_0, \mathcal{F}_1$ such that there are disjoint perfect sets $P_0, P_1 \subseteq R$ such that $A\cap P_0$ and $A\cap P_1$ are large and $A\cap P_0 \subseteq \bigcup \mathcal{F}_0$ and $A\cap P_1 \subseteq \bigcup \mathcal{F}_1$.

Assume that for some $n \in \omega$ we have constructed a collection of families $\{\mathcal{F}_h \colon h \in {^n2} \}$ such that 
\\
(1) $\mathcal{F}_{h} \subseteq \mathcal{F}_{h|(n-1)} $,
\\
(2) $\mathcal{F}_{h} \cap \mathcal{F}_{h'} = \emptyset$ for any $h, h' \in {^n}2, h \not = h'$,
\\
(3) $\bigcup\{\mathcal{F}_{h} \colon h \in {^n2} \} = \bigcup \mathcal{F}$,
\\
(4) there are disjoint perfect sets $P_h \subseteq P_{h|(n-1)}$ such that $A\cap P_h$ is large and $A\cap P_h \subseteq \bigcup \mathcal{F}_h$.

Now fix $h \in {^n2}$ and divide $\mathcal{F}_h$ into disjoint large families $\mathcal{F}_{h^{\smallfrown} 0}, \mathcal{F}_{h^{\smallfrown} 1}$ such that there are disjoint perfect sets $P_{h^{\smallfrown} 0}, P_{h^{\smallfrown} 1} \subseteq P_h$ such that $A\cap P_{h^{\smallfrown} 0}$ and $A\cap P_{h^{\smallfrown} 1}$ are large and $A\cap P_{h^{\smallfrown} 0} \subseteq \bigcup \mathcal{F}_{h^{\smallfrown} 0}$ and $A\cap P_{h^{\smallfrown} 1} \subseteq \bigcup \mathcal{F}_{h^{\smallfrown} 1}$.

Continuing the construction for any $n \in \omega$ we obtain the collection of families $\{\mathcal{F}_f \colon f \in {^\omega 2}\}$ of the properties:
\\
(1') $\mathcal{F}_{f|n} \subseteq \mathcal{F}_{f'|m} $ for any $f, f' \in {^\omega}{2} $ and $m < n$,
\\
(2') $\mathcal{F}_{f|n} \cap \mathcal{F}_{f'|n} = \emptyset$ for any $f, f' \in {^\omega}2, f \not = f'$, 
\\
(3') $\bigcup\{\mathcal{F}_{f|n} \colon f \in {^\omega}2 \} = \bigcup \mathcal{F}$
\\
(4') there are disjoint perfect sets $P_{f|n} \subseteq P_{f|(n-1)}$ such that $A\cap P_{f|n}$ is large and $A\cap P_{f|n} \subseteq \bigcup \mathcal{F}_{f|n}$.

Now take $Q = \{P_f \colon P_f \cap A \textrm{ is large}, f \in {^\omega 2}\}$
and let  $$P = \{\bigcap P_{f|n} \colon P_f \in Q, f \in {^\omega 2}, n \in \omega\}.$$
By Fusion Lemma, (see e. g. Lemma 26.2 in \cite{ST}),  $P$ is perfect. Since $A$ is large and $(s)-$dense and $A\cap P$ is large, $\mathcal{F}_P = \{F \cap P \colon F \in \mathcal{F}\}$ has cardinality continuum.
\end{proof}

\begin{lemma}
Let $A \subseteq \mathbb{R}$ be a large and $(s)-$dense.
Let $\mathcal{P}$ be a family of all perfect sets $P\subseteq \mathbb{R}$ such that $A \cap P \not = \emptyset$. Then for each large partition $\mathcal{F}$ of $A$ of cardinality continuum into $(s_0)$-sets there exists a subfamily $\mathcal{F'} \subseteq \mathcal{F}$ of cardinlaity continuum with the property
$$\forall_{F \in \mathcal{F'}}\ \exists_{P \in \mathcal{P}} F \cap P \not = \emptyset$$
for which 
$\bigcup \mathcal{F'}$ is not an $(s)$-set.
\end{lemma}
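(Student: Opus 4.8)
The plan is to carry out a Bernstein-type transfinite recursion of length $2^\omega$, building $\mathcal{F}'$ as a union of pairwise disjoint ``inner'' pieces of $\mathcal{F}$ while simultaneously reserving disjoint ``outer'' pieces that will never be allowed to enter $\mathcal{F}'$. First I would fix a perfect witness set. Since $A$ is large, it is not an $(s_0)$-set, so there is a perfect $P^* \subseteq \mathbb{R}$ such that every perfect $P' \subseteq P^*$ satisfies $P' \cap A \neq \emptyset$; applying this to $P'$ itself shows that $A \cap P'$ is large for every perfect $P' \subseteq P^*$, so Lemma 1 will be available at every perfect subset of $P^*$. I would then enumerate all perfect subsets of $P^*$ as $\{P_\alpha : \alpha < 2^\omega\}$, using that there are exactly continuum many perfect sets.

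At stage $\alpha < 2^\omega$ suppose the inner pieces $\{F_\beta : \beta < \alpha\}$, the outer pieces $\{F'_\beta : \beta < \alpha\}$, and witness points $\{x_\beta : \beta < \alpha\}$ have been chosen with all pieces distinct. By Lemma 1 applied to $P_\alpha$ (legitimate since $A \cap P_\alpha$ is large) the family $\{F \cap P_\alpha : F \in \mathcal{F}\}$ has cardinality continuum; as the pieces of $\mathcal{F}$ are pairwise disjoint, this yields continuum many $F \in \mathcal{F}$ with $F \cap P_\alpha \neq \emptyset$. Only $|\alpha| \cdot 2 < 2^\omega$ pieces have been committed so far, so I may choose two distinct, previously undecided pieces $F_\alpha, F'_\alpha \in \mathcal{F}$ meeting $P_\alpha$, together with a point $x_\alpha \in F'_\alpha \cap P_\alpha$; I declare $F_\alpha$ inner and $F'_\alpha$ outer.

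Setting $\mathcal{F}' = \{F_\alpha : \alpha < 2^\omega\}$, it has cardinality continuum and each $F_\alpha$ meets $P_\alpha \in \mathcal{P}$, so the displayed property holds. To see that $B := \bigcup \mathcal{F}'$ is not an $(s)$-set I would check that $P^*$ witnesses the failure. Given a perfect $P' \subseteq P^*$, we have $P' = P_\alpha$ for some $\alpha$. On one hand $F_\alpha \subseteq B$ and $F_\alpha \cap P' \neq \emptyset$, so $P' \cap B \neq \emptyset$. On the other hand $x_\alpha \in P'$, and since $\mathcal{F}$ is a partition the only piece containing $x_\alpha$ is $F'_\alpha$, which is outer and hence disjoint from every inner piece; thus $x_\alpha \notin B$ and $P' \not\subseteq B$. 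Consequently no perfect $P' \subseteq P^*$ lies inside $B$ or misses $B$, which is precisely the negation of the $(s)$-property for $B$.

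The step I expect to be the main obstacle is the choice of the witness $P^*$ and the verification that Lemma 1 applies at every stage: one must guarantee that the trace $A \cap P_\alpha$ is large for \emph{every} perfect subset of $P^*$, so that there really are continuum many candidate pieces to draw from, and one must use the partition structure to ensure each reserved point $x_\alpha$ stays outside $B$ permanently. The remaining bookkeeping — that fewer than continuum many pieces are committed before stage $\alpha$, leaving continuum many admissible choices — is routine once the large-trace property of $P^*$ is in place.
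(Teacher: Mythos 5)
Your proof is correct and follows essentially the same Bernstein-type transfinite recursion as the paper: at each stage two previously unused pieces of $\mathcal{F}$ meeting the current perfect set are reserved, one destined for $\mathcal{F}'$ and one kept out so that no $P_\alpha$ can be contained in $\bigcup \mathcal{F}'$, with Lemma 1 supplying continuum many candidates. Your restriction of the enumeration to the perfect subsets of a fixed witness $P^*$ on which the trace of $A$ is large is a sensible tightening (the paper enumerates all perfect sets meeting $A$), and it makes the appeal to Lemma 1 at every stage legitimate.
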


\begin{proof}
Enumerate 
$\mathcal{P} = \{P_\alpha \subseteq \mathbb{R} \colon P_\alpha \textrm{ is perfect and } A\cap P_\alpha \not = \emptyset, \alpha < 2^\omega\}$.

Let $\mathcal{F}$ be a large partition of $A$ of cardinality continuum into $(s_0)$-sets. By Lemma 1 there are perfect sets $P\subseteq \mathbb{R}$ such that $\mathcal{F}_P = \{F \cap P \colon F \in \mathcal{F}\}$ has cardinality continuum.
Hence for each $P_\alpha \in \mathcal{P}$ we can choose disjoint sets
$$B^{0}_{\alpha}, B^{1}_{\alpha} \in \{F \in \mathcal{F} \colon F \cap P_\alpha \not = \emptyset\} \setminus (\{B^{0}_{\beta} \colon \beta < \alpha\}\cup \{B^{1}_{\beta} \colon \beta < \alpha\}).$$
Now for each $\varepsilon \in \{0, 1\}$ consider families 
$\mathcal{B}^{\varepsilon} = \{B^{\varepsilon}_{\alpha} \colon \alpha < 2^\omega\}$. Obviously by the construction 
$\mathcal{B}^{0}\cap \mathcal{B}^{1} = \emptyset.$ 

The sets $\bigcup \mathcal{B}^{\varepsilon}$ are not $(s)$-sets.
Indeed. Suppose that  $\bigcup \mathcal{B}^{\varepsilon}$ is an $(s)$-set for some $\varepsilon \in \{0, 1\}$. Then there exists $P_\alpha \in \mathcal{P}$ such that $P_\alpha \cap \bigcup \mathcal{B}^{\varepsilon} = \emptyset$. But by the construction we have that 
$\{F \in \mathcal{F} \colon F \cap P_\alpha \cap \bigcup \mathcal{B}^{\varepsilon} \not = \emptyset\}$ is non-empty. A contradiction.

If $\bigcup \mathcal{B}^{\varepsilon}$ is not an $(s)$-set for some $\varepsilon \in \{0, 1\}$, then there exists $P_\alpha \in \mathcal{P}$ such that $P_\alpha \subseteq \bigcup \mathcal{B}^{\varepsilon}$ and by the construction $P_\alpha \cap \bigcup \mathcal{B}^{1 - \varepsilon} \not = \emptyset$ which contradicts with disjointness of families $\mathcal{B}^{0}$ and  $\mathcal{B}^{1}$. 
\end{proof}

\begin{theorem}
Let $A \subseteq \mathbb{R}$ be a large and $(s)-$dense set and let $\mathcal{F}$ be a large partition of $A$ of cardinality continuum into $(s_0)-$sets. Then $\mathcal{F}$ is not a Kuratowski partition.
\end{theorem}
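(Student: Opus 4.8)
The plan is to derive the theorem as an immediate consequence of Lemma 2, once the correct reading of the Baire property in the Marczewski structure is made explicit. In the $(s)$-structure the family of $(s)$-sets forms the $\sigma$-algebra playing the role of the sets with the Baire property, while the $(s_0)$-sets constitute the corresponding $\sigma$-ideal of small sets. Accordingly, the condition in Definition 1 that $\bigcup \mathcal{F}'$ has the Baire property, transported to this setting, amounts to the demand that $\bigcup \mathcal{F}'$ be an $(s)$-set for every subfamily $\mathcal{F}' \subseteq \mathcal{F}$.

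First I would observe that the hypotheses of the theorem coincide verbatim with those of Lemma 2: the set $A$ is large and $(s)$-dense, and $\mathcal{F}$ is a large partition of $A$ of cardinality continuum into $(s_0)$-sets. Hence Lemma 2 applies and produces a subfamily $\mathcal{F}' \subseteq \mathcal{F}$ of cardinality continuum such that $\bigcup \mathcal{F}'$ is not an $(s)$-set.

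Next I would conclude that, since $\bigcup \mathcal{F}'$ fails to be an $(s)$-set, it does not have the Baire property in the Marczewski structure. This single subfamily therefore witnesses the failure of the defining condition of Definition 1, and so $\mathcal{F}$ is not a Kuratowski partition.

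The substantive work has already been carried out in the two preceding lemmas, so at the level of the theorem I expect no further obstacle. The genuine difficulty lies upstream: Lemma 1 invokes the Fusion Lemma to manufacture a perfect set $P$ for which $\mathcal{F}_P$ attains full cardinality continuum, and Lemma 2 then runs a Bernstein-type diagonalization along an enumeration of the perfect sets meeting $A$, splitting $\mathcal{F}$ into two disjoint continuum-sized subfamilies whose unions cannot be separated by any perfect set. Granting those, the theorem is a direct corollary, the only point requiring care being the explicit identification of ``has the Baire property'' with ``is an $(s)$-set'' in this structure.
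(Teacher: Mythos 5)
Your proposal is correct and follows essentially the same route as the paper: the paper's proof likewise invokes Lemma 1 and then Lemma 2 to obtain a subfamily $\mathcal{F}'$ with $\bigcup \mathcal{F}'$ not an $(s)$-set, contradicting the assumption of a Kuratowski partition. Your only addition is to make explicit the identification of ``has the Baire property'' with ``is an $(s)$-set'' in the Marczewski structure, which the paper leaves implicit.
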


\begin{proof}
Let $A \subseteq \mathbb{R}$ be a large and $(s)-$dense set and let $\mathcal{F}$ be a large partition of $A$ of cardinality continuum into $(s_0)-$sets.
Suppose in contrary that $\mathcal{F}$ is a Kuratowski partition. By Lemma 1 there are perfect sets $P \subseteq \mathbb{R}$ such that 
$\mathcal{F}_P = \{F \cap P \colon F \in \mathcal{F}\}$  has cardinality continuum. By Lemma 2 there exists a large subfamily $\mathcal{F'} \subseteq \mathcal{F}$ such that $\bigcup \mathcal{F'}$ is not an $(s)$-set. A contradiction.
\end{proof}

\section{Partitions in the Ellentuck structure}

The Ellentuck topology $[\omega]^{\omega}_{EL}$ on $[\omega]^\omega$ is generated by sets of the form
$$[a, A] = \{B \in [A]^\omega \colon a \subset B \subseteq a \cup A\},$$
where $a \in [\omega]^{<\omega}$ and $A \in [\omega]^\omega$. We call such sets \textit{Ellentuck sets, (or $EL$-sets).} Obviously $[a, A] \subseteq [b, B]$ iff $b \subseteq a$ and $A \subseteq B$. 

A set $M \subset [\omega]^\omega$ is \textit{completely Ramsey}, (or $M$ is a \textit{$CR$-set}), if for every $[a, A]$ there exists $B \in [A]^\omega$ such that $[a, B] \subset M$ or $[a, B] \cap M = \emptyset.$ 
A set $M \subset [\omega]^\omega$ is \textit{nowhere Ramsey}, (or $M$ is an \textit{$NR$-set}), if for every $[a, A]$ there exists $B \in [A]^\omega$ such that $[a, B] \cap M = \emptyset.$
An Ellentuck set is \textit{large} if is not an $NR$-set.

Let $M \subseteq [\omega]^\omega$ be a large and $EL$-open set and $\mathcal{F}$ be a large partition of $M$ into $NR$-sets. $M$ is \textit{dense in Ellentuck sense ($EL$-dense)} if for each $[a, A] \subseteq [\omega]^{\omega}_{EL}$ such that $M \cap [a, A]$ is large the family $\{F \in \mathcal{F} \colon F \cap [a, A] \not = \emptyset\}$.
\\
\\
\noindent
\textbf{Fact 1 (\cite{P})} \textit{Let $M$ be a $EL-$dense and $EL$-open set. Then for each $A \subseteq [\omega]^\omega$ there exists $B \subseteq [\omega]^\omega$ such that $B \subseteq A$ and for each $a \in [\omega]^{\omega}$ the set $[\emptyset, B \cup a] \subseteq M$.}    

\begin{lemma}
Let $M \subseteq [\omega]^\omega$ be a large and $EL$-dense $EL$-open set. For any large partition $\mathcal{F}$ of $M$ into $NR$-sets and for any $[a, A] \subseteq [\omega]^{\omega}_{EL} $ the family $\mathcal{F}_{[a, A]} = \{F \cap [a, A] \colon F \in \mathcal{F}\}$ has cardinality continuum.
\end{lemma}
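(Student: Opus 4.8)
The plan is to transport, into the Ellentuck setting, the binary-splitting-plus-fusion argument of Lemma~1: perfect sets are replaced by Ellentuck sets $[a,A]$, $(s_0)$-sets by $NR$-sets, and the Sacks Fusion Lemma by Fact~1. Fix the given $[a,A]$, which by the role played by $EL$-density we may take to satisfy that $M\cap[a,A]$ is large (this is the exact analogue of the clause ``$A\cap P$ is large'' at the end of the proof of Lemma~1). First I would build a tree $\{\mathcal{F}_h\colon h\in {}^{<\omega}2\}$ of subfamilies of $\mathcal{F}$, together with Ellentuck sets $[a_h,A_h]\subseteq[a,A]$, so that at each node $h$ the families $\mathcal{F}_{h^{\smallfrown}0},\mathcal{F}_{h^{\smallfrown}1}$ are disjoint large subfamilies of $\mathcal{F}_h$ with $\mathcal{F}_{h^{\smallfrown}0}\cup\mathcal{F}_{h^{\smallfrown}1}=\mathcal{F}_h$, the sets $[a_{h^{\smallfrown}0},A_{h^{\smallfrown}0}]$ and $[a_{h^{\smallfrown}1},A_{h^{\smallfrown}1}]$ are \emph{disjoint} sub-Ellentuck-sets of $[a_h,A_h]$, and each $M\cap[a_{h^{\smallfrown}i},A_{h^{\smallfrown}i}]$ is large and contained in $\bigcup\mathcal{F}_{h^{\smallfrown}i}$. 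Each splitting step is legitimate precisely because $M$ is large and $EL$-dense: largeness of $M\cap[a_h,A_h]$ forces $\{F\in\mathcal{F}\colon F\cap[a_h,A_h]\neq\emptyset\}$ to be large, so it can be split into two large pieces, each then captured by a large Ellentuck subset, with the two subsets separated by differing stems.

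Next I would pass to the limit along the $2^\omega$ branches. For $f\in {}^{\omega}2$ the sets $[a_{f|n},A_{f|n}]$ form a decreasing chain whose stems $a_{f|n}$ increase and whose $A_{f|n}$ I would arrange to shrink along a fusion sequence; applying Fact~1 to this diagonal data yields a single set $B_f$ with $[\emptyset,B_f\cup c]\subseteq M$ for every finite $c$, and with $a_{f|n}\subset B_f\subseteq a_{f|n}\cup A_{f|n}$ for all $n$, so that $B_f\in M\cap[a,A]$. If $f\neq g$ and $n$ is least with $f(n)\neq g(n)$, then $B_f$ and $B_g$ lie in the disjoint children Ellentuck sets at the splitting node $f|n=g|n$, whence $B_f\neq B_g$; moreover $B_f\in\bigcup\mathcal{F}_{f|(n+1)}$ and $B_g\in\bigcup\mathcal{F}_{g|(n+1)}$ with $\mathcal{F}_{f|(n+1)}\cap\mathcal{F}_{g|(n+1)}=\emptyset$, so since $\mathcal{F}$ is a partition the unique classes containing $B_f$ and $B_g$ are distinct. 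This exhibits $2^\omega$ distinct members of $\mathcal{F}$ meeting $[a,A]$, so $\mathcal{F}_{[a,A]}$ has cardinality continuum, the reverse inequality being trivial.

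The main obstacle is the fusion step itself: the Ellentuck topology carries no off-the-shelf Fusion Lemma of Sacks type, so the coherence between the branch-wise shrinking $A_{f|n}$ and the growing stems $a_{f|n}$ must be organised so that a genuine diagonal limit exists and remains inside $M$. This is exactly what Fact~1 is designed to supply, and the delicate points are ensuring (i) that the two Ellentuck sets at each splitting node are disjoint, which is what guarantees distinctness of the limits, and (ii) that the diagonal set still meets the hypotheses of Fact~1 relative to every initial segment. A secondary check is that restricting the construction to the prescribed $[a,A]$ never destroys largeness at any node; this is handled throughout by the $EL$-density hypothesis, mirroring the part played by $(s)$-density in Lemma~1.
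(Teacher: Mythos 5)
Your plan follows essentially the same route as the paper's proof of this lemma: a binary tree of pairwise disjoint large subfamilies $\mathcal{F}_h$ of $\mathcal{F}$ with associated nested Ellentuck sets $[a_h,A_h]$ absorbing $M\cap[a_h,A_h]$, followed by a fusion along the branches. Your way of finishing is in fact cleaner than the paper's: you extract one point $B_f$ per branch $f\in{}^{\omega}2$ and use the pairwise disjointness of the families $\mathcal{F}_{f|(n+1)}$ to exhibit continuum many distinct members of $\mathcal{F}$ meeting $[a,A]$, whereas the paper fuses everything into a single $[\emptyset,B]$ and then simply asserts that $\mathcal{F}_{[\emptyset,B]}$ has cardinality continuum.

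The one step you leave at the level of assertion, however, is exactly the heart of the lemma. You write that largeness of $M\cap[a_h,A_h]$ forces $\{F\in\mathcal{F}\colon F\cap[a_h,A_h]\neq\emptyset\}$ to be large, ``so it can be split into two large pieces, each then captured by a large Ellentuck subset, with the two subsets separated by differing stems.'' $EL$-density only delivers the first clause; it does not by itself produce two disjoint large subfamilies $\mathcal{F}_{h^{\smallfrown}0},\mathcal{F}_{h^{\smallfrown}1}$ together with incompatible Ellentuck children $[a_{h^{\smallfrown}0},A_{h^{\smallfrown}0}]$, $[a_{h^{\smallfrown}1},A_{h^{\smallfrown}1}]$ such that $M\cap[a_{h^{\smallfrown}i},A_{h^{\smallfrown}i}]\subseteq\bigcup\mathcal{F}_{h^{\smallfrown}i}$. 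This is precisely where the paper invests its effort, via a Mathias-forcing-style amalgamation: enumerating all $2^{n}$ sub-stems $s_1,\dots,s_k$ of $a_h$ and building decreasing chains $C^{h}_{0}\supseteq\cdots\supseteq C^{h}_{k}$ and $D^{h}_{0}\supseteq\cdots\supseteq D^{h}_{k}$ of subsets of $A_h$ that successively secure the capture condition for each sub-stem before the new stem element $b_{n+1}=\min C^{h}_{k}$ is chosen. Without this (or an equivalent) explicit construction your splitting step is a plan rather than a proof, and the subsequent fusion has nothing certified to fuse.
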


\begin{proof}
We will construct a collection of families $\{\mathcal{F}_f \colon f \in {^\omega 2} \}$ of $\mathcal{F}$ of the following properties, for any $n, m \in \omega$:
\\
(1) $\mathcal{F}_{f|n} \subseteq \mathcal{F}_{f'|m} $ for any $f, f' \in {^\omega}{2} $ and $m < n$,
\\
(2) $\mathcal{F}_{f|n} \cap \mathcal{F}_{f'|n} = \emptyset$ for any $f, f' \in {^\omega}2, f \not = f'$, 
\\
(3) $\bigcup\{\mathcal{F}_{f|n} \colon f \in {^\omega}2 \} = \bigcup \mathcal{F}$,
\\
(4) there are disjoint  sets $[a_{f|n}, A_{f|n}] \subseteq [a_{f|(n-1)}, A_{f|(n-1)}]$ such that $M\cap [a_{f|n}, A_{f|n}]$ is large and $A\cap [a_{f|n}, A_{f|n}] \subseteq \bigcup \mathcal{F}_{f|n}$.

Using Fact 1 we can start with some $[\emptyset, B]$ where $B \subseteq A$.

Assume that for some $n \in \omega$ we have constructed families $\{\mathcal{F}_h \colon h \in {^n 2}\}$ of properties (1) - (4).

Fix $h \in {^n 2}$. Now we will divide $\mathcal{F}_h$ into disjoint large  subfamilies $\mathcal{F}_{h^{\smallfrown} 0}$ and $\mathcal{F}_{h^{\smallfrown} 1}$ in the followin way, (we use construction similar to Mathias forcing~\cite{JB}).

Take $[a_h, A_h]$ associated with $\mathcal{F}_h$, where $a_h = \{b_1, b_2, ..., b_n\}$. Enumerate all subsets of $a_h$ by $s_1, ..., s_k,$ where $ k = 2^n$. Construct the sequences of subsets of $A_h$:
$$C^{h}_{0} \supseteq C^{h}_{1}\supseteq .. \supseteq C^{h}_{k}
\textrm{ and }
D^{h}_{0} \supseteq D^{h}_{1}\supseteq .. \supseteq D^{h}_{k}$$ 
as follows: let $C^{h}_{0},  D^{h}_{0} \subseteq [A_{h}\setminus a_h]^\omega $ and $C^{h}_{0}\cap D^{h}_{0}  = \emptyset$.
Given $C^{h}_{i}, D^{h}_{i}$ if there exist $C\subseteq C^{h}_{i}$ and $D\subseteq D^{h}_{i}$ such that
$M\cap [s_i, C] \subseteq  \bigcup \mathcal{F}_{h\smallfrown 0}$
and 
$M\cap [s_i, D] \subseteq  \bigcup \mathcal{F}_{h\smallfrown 1}$
and $\mathcal{F}_{h\smallfrown 0}, \mathcal{F}_{h\smallfrown 1}$ are large and disjoint then $C^{h}_{i+1}:=C$ and $D^{h}_{i+1}:=D$. If not then then we take $C^{h}_{i+1}:=C^{h}_{i}$ and $D^{h}_{i+1}:=D^{h}_{i}$.

Let $b_{n+1}: = \min C_{k}^{h}$. Take
$$[a_{h^\smallfrown 0}, A_{h^\smallfrown 0}] = [a_h \cup b_{n+1}\}, C_{k}^{h} \setminus \{b_{n+1}\}]$$ and 
$$[a_{h^\smallfrown 1}, A_{h^\smallfrown 1}] = [a_h \cup \{b_{n+1}\}, C_{k}^{h} \setminus \{b_{n+1}\}].$$
Thus we have constructed the collection of families $\{\mathcal{F}_f \colon f \in {^\omega 2}\}$ and associated with then $[a_f, A_f] \subseteq [\omega]^{\omega}_{EL}$ fulfilling (1) - (4).

Using Fusion Lemma, (see e.g. \cite{TJ}  or \cite{ST}) to 
$$[\emptyset, B] = \bigcap \bigcup \{[a_{f|n}, A_{f|n}] \colon f \in {^\omega}2, n \in \omega\}$$
we obtain that $\mathcal{F}_{[\emptyset, B]}$ has cardinality continuum.
\end{proof}

\begin{lemma}
Let $M \subseteq [\omega]^\omega$ be a large and $EL-$dense $EL$-open set. Let $\mathcal{P}$ be a family of all perfect sets $P\subseteq [\omega]^\omega$ such that $M \cap P \not = \emptyset$.
Then for each large partition $\mathcal{F}$ of $M$ of cardinality continuum into $NR$-sets there exists a subfamily $\mathcal{F}' \subseteq \mathcal{F}$ with the property 
$$\forall_{F \in \mathcal{F}'}\ \exists_{P \in \mathcal{P}}\ F \cap P \not = \emptyset$$
for which $\bigcup \mathcal{F}'$ is not a $CR$-set.
\end{lemma}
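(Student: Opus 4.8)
The plan is to reproduce, in the Ellentuck setting, the transfinite Bernstein-type construction used in the proof of Lemma 2, replacing perfect sets by Ellentuck sets and the pair $((s),(s_0))$ by $(CR,NR)$. The first move is to pass to a region in which $M$ is \emph{uniformly} large: since $M$ is $EL$-open and $EL$-dense, Fact 1 furnishes an Ellentuck set $[\emptyset,B]\subseteq M$. Inside this cone every Ellentuck set $[a,A']$ with $a\cup A'\subseteq B$ satisfies $M\cap[a,A']=[a,A']$, so it is large, and the preceding lemma applies to each of them. This uniform largeness is exactly the ingredient that the $(s)$-argument received automatically.

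Next I would enumerate in order type $2^\omega$ all Ellentuck sets lying below the cone, say $\{[a_\alpha,A_\alpha]\colon\alpha<2^\omega\}$; there are exactly continuum many. I then run a recursion: at stage $\alpha$ the preceding lemma gives that $\{F\in\mathcal{F}\colon F\cap[a_\alpha,A_\alpha]\neq\emptyset\}$ has cardinality continuum, while only $<2^\omega$ blocks have been used so far, so I may choose two distinct partition blocks $B^0_\alpha,B^1_\alpha$ from that family that were not selected earlier. Distinct blocks of a partition are disjoint, so putting $\mathcal{B}^\varepsilon=\{B^\varepsilon_\alpha\colon\alpha<2^\omega\}$ for $\varepsilon\in\{0,1\}$ produces two disjoint subfamilies of $\mathcal{F}$, each of which meets every Ellentuck set below the cone.

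Then I would verify that $\bigcup\mathcal{B}^\varepsilon$ is not a $CR$-set for either $\varepsilon$, so that one may take $\mathcal{F}'=\mathcal{B}^0$. Suppose it were $CR$. Applying the definition to $[\emptyset,B]$ yields some $B'\in[B]^\omega$ with $[\emptyset,B']\subseteq\bigcup\mathcal{B}^\varepsilon$ or $[\emptyset,B']\cap\bigcup\mathcal{B}^\varepsilon=\emptyset$. But $[\emptyset,B']$ is one of the enumerated Ellentuck sets, say $[a_\gamma,A_\gamma]$, so by construction it meets $B^\varepsilon_\gamma\subseteq\bigcup\mathcal{B}^\varepsilon$, which kills the disjointness alternative; it also meets $B^{1-\varepsilon}_\gamma$, and since $\mathcal{F}$ is a partition any point of that intersection lies in the single block $B^{1-\varepsilon}_\gamma\notin\mathcal{B}^\varepsilon$, hence outside $\bigcup\mathcal{B}^\varepsilon$, which kills the containment alternative. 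This contradiction shows $\bigcup\mathcal{B}^\varepsilon$ is not $CR$. The witnessing clause is then immediate: each chosen block is nonempty and contained in $M$, so any of its points lies in a perfect set $P$ with $M\cap P\neq\emptyset$, giving $P\in\mathcal{P}$ with $F\cap P\neq\emptyset$.

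The hard part, and the only genuine departure from the $(s)$-setting, is the ``disjoint'' alternative in the definition of $CR$: a refinement $[a,B']$ can be disjoint from $\bigcup\mathcal{B}^\varepsilon$ \emph{for free} whenever $M\cap[a,B']$ fails to be large, simply because all blocks live in $M$. Passing first to the cone $[\emptyset,B]\subseteq M$ delivered by Fact 1 is precisely what neutralises this, since inside the cone every refinement still meets $M$ fully, so the diagonal choices remain available at every stage and disjointness from $\bigcup\mathcal{B}^\varepsilon$ cannot be achieved trivially. Checking that the size-continuum hypothesis of the preceding lemma holds for every Ellentuck set arising in the recursion is the only other delicate point, and it is exactly what the cone guarantees.
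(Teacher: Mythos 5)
Your proposal is correct and follows essentially the same route as the paper: a transfinite Bernstein-type recursion choosing two fresh blocks $B^0_\alpha,B^1_\alpha$ meeting each Ellentuck set (justified by Lemma~3), followed by the same two-alternative contradiction with the definition of a $CR$-set. The only cosmetic difference is that you first pass to a cone $[\emptyset,B]\subseteq M$ via Fact~1 to guarantee uniform largeness, whereas the paper achieves the same effect by enumerating only those $[a_\alpha,A_\alpha]$ whose intersection with $M$ is a non-empty perfect set; your version is in fact slightly more careful about why the ``disjoint'' alternative cannot hold trivially.
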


\begin{proof}
Let $$\mathcal{P} = \{[a_\alpha, A_\alpha]\subseteq [\omega]^\omega \colon [a_\alpha, A_\alpha] \cap M \textrm{ is a non-empty perfect set, } \alpha < 2^\omega\}.$$
Let $\mathcal{F}$ be a large partition of $M$ of cardinality continuum into $NR$-sets.
By Lemma 3 there are perfect sets $[a, A] \subseteq [\omega]^\omega$ such that 
$$\mathcal{F}_{[a, A]} = \{F \cap [a, A]  \colon F \in \mathcal{F}\}$$
has cardinality continuum.

Thus for each $[a_\alpha, A_\alpha] \in \mathcal{P}$ we can choose disjoint sets  
$$B^{0}_{\alpha}, B^{1}_{\alpha} \in \{F \in \mathcal{F} \colon F \cap [a_\alpha, A_\alpha] \not = \emptyset\} \setminus (\{B^{0}_{\beta} \colon \beta < \alpha\}\cup \{B^{1}_{\beta} \colon \beta < \alpha\}).$$
For each $\varepsilon \in \{0, 1\}$ consider families 
$\mathcal{B}^{\varepsilon} = \{B^{\varepsilon}_{\alpha} \colon \alpha < 2^\omega\}$. Obviously by the construction 
$\mathcal{B}^{0}\cap \mathcal{B}^{1} = \emptyset.$ 
The sets $\bigcup \mathcal{B}^{\varepsilon}$ are not $(s)$-sets.

Indeed. Suppose that  $\bigcup \mathcal{B}^{\varepsilon}$ is an $(s)$-set for some $\varepsilon \in \{0, 1\}$. Then there exists $[a_\alpha, A_\alpha] \in \mathcal{P}$ such that $ [a_\alpha, A_\alpha]\cap \bigcup \mathcal{B}^{\varepsilon} = \emptyset$. But by the construction we have that 
$\{F \in \mathcal{F} \colon F \cap [a_\alpha, A_\alpha] \cap \bigcup \mathcal{B}^{\varepsilon} \not = \emptyset\}$ is non-empty. A contradiction.

If $\bigcup \mathcal{B}^{\varepsilon}$ is not an $(s)$-set for some $\varepsilon \in \{0, 1\}$, then there exists $[a_\alpha, A_\alpha] \in \mathcal{P}$ such that $[a_\alpha, A_\alpha] \subseteq \bigcup \mathcal{B}^{\varepsilon}$ and by the construction $[a_\alpha, A_\alpha] \cap \bigcup \mathcal{B}^{1 - \varepsilon} \not = \emptyset$ which contradicts with disjointness of families $\mathcal{B}^{0}$ and  $\mathcal{B}^{1}$
\end{proof}

\begin{theorem}
No large $EL$-dense and $EL$-open set $M \subseteq [\omega]^\omega$ admit a Kuratowski partition.
\end{theorem}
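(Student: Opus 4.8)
The plan is to argue by contradiction, following the structure of the proof of Theorem 1 but replacing each notion of the Marczewski structure by its Ellentuck counterpart: the $NR$-sets play the role of the $(s_0)$-sets, the $CR$-sets play the role of the $(s)$-sets, and the basic Ellentuck neighbourhoods $[a,A]$ play the role of the perfect sets. The key conceptual input is that, by Ellentuck's theorem, a subset of $[\omega]^\omega$ has the Baire property in the topology $[\omega]^{\omega}_{EL}$ if and only if it is a $CR$-set; thus the defining condition of a Kuratowski partition, that every subfamily union has the Baire property, becomes in this setting the requirement that $\bigcup \mathcal{F}'$ be a $CR$-set for every $\mathcal{F}' \subseteq \mathcal{F}$.

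So suppose toward a contradiction that some large, $EL$-dense, $EL$-open set $M$ admits a Kuratowski partition $\mathcal{F}$, that is, a partition of $M$ into $NR$-sets for which $\bigcup \mathcal{F}'$ is a $CR$-set for every subfamily $\mathcal{F}' \subseteq \mathcal{F}$. Since $M$ is large, $\mathcal{F}$ is a large partition of $M$ into $NR$-sets, so Lemma 3 applies and yields, for every neighbourhood $[a,A]$, that $\mathcal{F}_{[a,A]}$ has cardinality continuum. Because the assignment $F \mapsto F \cap [a,A]$ maps $\mathcal{F}$ onto $\mathcal{F}_{[a,A]}$, this forces $|\mathcal{F}| \geq 2^\omega$, while $|\mathcal{F}| \leq |[\omega]^\omega| = 2^\omega$ as $\mathcal{F}$ is a family of pairwise disjoint nonempty subsets; hence $|\mathcal{F}| = 2^\omega$.

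With $\mathcal{F}$ now known to be a large partition of $M$ of cardinality continuum into $NR$-sets, I would invoke Lemma 4 to produce a subfamily $\mathcal{F}' \subseteq \mathcal{F}$ for which $\bigcup \mathcal{F}'$ fails to be a $CR$-set. This directly contradicts the Kuratowski property of $\mathcal{F}$ established above, and the theorem follows.

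The main obstacle is conceptual rather than computational: one must check that the dictionary between the two structures is faithful at exactly the two places where it is used. First, the passage from ``$\bigcup \mathcal{F}'$ has the Baire property'' to ``$\bigcup \mathcal{F}'$ is a $CR$-set'' rests on Ellentuck's characterisation and should be stated explicitly, since it is what turns the hypothesis on $\mathcal{F}$ into a statement about $CR$-sets. Second, one should reconcile the hypotheses and conclusion of Lemma 4 with the present application: the cardinality-continuum hypothesis is supplied by the step above, and the conclusion ``$\bigcup \mathcal{F}'$ is not a $CR$-set'' is what the lemma's argument actually establishes (the phrase ``$(s)$-set'' appearing in its proof being the Marczewski terminology carried over). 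Once these identifications are fixed, the contradiction is immediate.
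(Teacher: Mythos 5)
Your proposal is correct and follows essentially the same route as the paper's own proof: contradiction via Lemma 3 (to get that $\mathcal{F}_{[a,A]}$, and hence $\mathcal{F}$, has cardinality continuum) followed by Lemma 4 (to produce a subfamily whose union is not a $CR$-set). You are in fact somewhat more careful than the paper, which simply assumes the partition has cardinality continuum and never states the Ellentuck-theorem identification of the Baire property in $[\omega]^\omega_{EL}$ with being completely Ramsey; both of these points you make explicit.
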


\begin{proof}
Let $\mathcal{F}$ be a large partition of $M$ of cardinality continuum into $NR$-sets. Suppose in contrary that $\mathcal{F}$ is a Kuratowski partition. 
By Lemm~3 for each  $[a, A] \subseteq [\omega]^\omega_{EL}$ such that 
$\mathcal{F}_{[a, A]} = \{F\cap [a, A] \colon F \in \mathcal{F}\}$
has cardinality continuum.
By Lemma 4 there exists $\mathcal{F}' \subseteq \mathcal{F}$ such that $\bigcup \mathcal{F}'$ is not a $CR$-set.
\end{proof}

\begin {thebibliography}{123456}
\thispagestyle{empty}

\bibitem {JB} {\sc J. E. Baumgartner}, 
{\sl Iterated forcing}, In: {Surveys in se theory (Ed. A. R. D. Mathias )},
London Math. Soc. Lecture Notes Series, 87, Cambridge University Press 1983, 1-59.

\bibitem {LB} {\sc L. Bukovsky}, 
{\sl Any partition into Lebesque measure zero sets produces a non-measurable set},
Bull. Ac. Pol.: Math., 27 (1979), 431-435.

\bibitem {EL} {\sc E. Ellentuck}, 
{\sl A new proof that analytic sets are Ramsey},
J. Symb. Log. 39, (1974), 163-165.

\bibitem{EFK1}{\sc A. Emeryk, R. Frankiewicz and W. Kulpa},
{\sl On functions having the Baire property},
Bull. Acad.Pol. Sci., 27 (1979). 489-491.

\bibitem {EFK} {\sc A. Emeryk, R. Frankiewicz and W. Kulpa}, 
{\sl Remarks on Kuratowski's theorem on meager sets},
Bull. Ac. Pol.: Math., 27, (1979), 493-498.

\bibitem{FS}{\sc R. Frankiewicz and S. Szczepaniak},
{\sc On partitions of Ellentuck-large sets}, 
Topology Appl., 167 (2014), 80–86.

\bibitem {TJ} {\sc T. Jech}, 
{\sl Set theory},
Academic Press, New York, 1978.

\bibitem {KK} {\sc K. Kuratowski}, 
{\sl Topology 1},
Polish Scientific Publ. ; New York ; London : Academic Press, 1966.

\bibitem {KK1} {\sc K. Kuratowski},
{\sl Quelques problemes concernant les espaces m\'etriques nons\'eparables},
Fund. Math., 25 (1935), 534-545. 

\bibitem{KK2}{\sc K. Kuratowski},
{\sl A theorem on ideals and some applications of it to the Baire Property in Polish spaces}(in Russian),
Russian Math. Surveys, 31(5) (1976), 124-127.

\bibitem {P} {\sc Sz. Plewik}, 
{\sl On completely Ramsey sets},
Fund. Math., 127(2), (1987), 127-132.

\bibitem{RS}{\sc R. H. Solovay},
{\sl A model of set theory in which every set of reals is Lebesgue measurable},
Ann. of Math., 92 (1970), 1-56.

\bibitem {ST} {\sc S. Todorcevic}, 
{\sl Introduction do Ramsey Spaces}, Princeton University Press and Oxford, 2010.

\end {thebibliography}

{\sc Ryszard Frankiewicz}
\\
Institute of Mathematics, Polish Academy of Sciences, Warsaw, Poland,
\\
{\sl e-mail: rf@impan.pl}
\\

{\sc Joanna Jureczko}
\\
Wroc\l{}aw University of Science and Technology, Wroc\l{}aw, Poland,
\\
{\sl e-mail: joanna.jureczko@pwr.edu.pl}

\end{document}